%------------------------------------------------------------------------------
% Beginning of journal.tex
%------------------------------------------------------------------------------
%
% AMS-LaTeX 1.2 sample file for journals, based on amsart.cls.
%
% Replace amsart by the documentclass for the target journal, e.g. tran-l.
%
\documentclass{amsart}

\usepackage{amssymb,amsmath,amsthm,latexsym,stmaryrd}

\newtheorem{thm}{Theorem}[section]
\newtheorem{theorem}{Theorem}[section]
\newtheorem*{theorem A}{Theorem A}
\newtheorem*{theorem B}{N\"olker's Theorem}

\theoremstyle{remark}

\theoremstyle{remark}

\theoremstyle{definition}

\numberwithin{equation}{section}
\def\({\left ( }
    \def\){\right )}
\def\<{\left < }
\def\>{\right >}

%TABLE CAPTIONS:

%FONTS

%    Absolute value notation

\newcommand{\ie}{i.e. }

\newcommand{\s}{\mathfrak{S}}
\newcommand{\HCC}{\mathcal{HC}}
\newcommand{\be}[1]{\begin{equation}\label{#1}}
\newcommand{\ee}{\end{equation}}
\newcommand{\n}{\nabla}

\newcommand{\z}{\mathfrak{z}}
\newcommand{\W}{\mathcal{W}}
\newcommand{\MM}{\mathcal{M}}
\newcommand{\ea}{\varepsilon_\alpha}

\newcommand{\KKK}{\mathcal{K}}
\newcommand{\R}{\mathbb R}
\newcommand{\C}{\mathbb C}
\newcommand{\X}{\mathfrak X}

\newcommand{\ta}{\theta}

\newcommand{\lm}{\lambda}
\newcommand{\gm}{\gamma}
\newcommand{\al}{\alpha}
\newcommand{\bt}{\beta}
\newcommand{\Dt}{\Delta}

\newcommand{\RNum}[1]{\uppercase\expandafter{\romannumeral #1\relax}}
\newcommand{\Bia}[1]{\mathrm{Bia}(\mathrm{\uppercase\expandafter{\romannumeral #1\relax}})}

\newcommand{\thmref}[1]{Theorem~\ref{#1}}

\begin{document}

\vspace{2cm}

\title[Matrix Lie Groups as 4-Dimensional Hypercomplex Manifolds \dots]
{Matrix Lie Groups as 4-Dimensional Hypercomplex Manifolds with Hermitian-Norden metrics}

%    Information for first author
\author{Hristo Manev}
%    Address of record for the research reported here
\address{Medical University -- Plovdiv, Faculty of Public Health,
Department of Medical Informatics, Biostatistics and e-Learning,   15-A Vasil Aprilov
Blvd.,   Plovdiv 4002,   Bulgaria;}
%\address[2]{Paisii
%Hilendarski University of Plovdiv,   Faculty of Mathematics and
%Informatics,   Department of Algebra and Geometry,   236
%Bulgaria Blvd.,   Plovdiv 4027,   Bulgaria}
\email{hristo.manev@mu-plovdiv.bg}

\subjclass[2010]{Primary: 22E60, 22E15, 53C15, 53C50; Secondary: 22E30, 53C55}

%\date{January 1, 2007 and, in revised form, June 22, 2007.}

%\dedicatory{{\rm Dedicated to my doctoral advisor Prof. Dimitar Mekerov on the occasion of his 70th birthday}}
%\newcommand{\FilSupport}{Research supported by the project NI13-FMI-002 of the Scientific
%Research Fund at the University of Plovdiv}

\keywords{Lie group, Lie algebra, Matrix representation, Almost hypercomplex structure, Hermitian metric, Norden metric}

\begin{abstract}
There are studied Lie groups considered as almost hypercomplex Hermitian-Norden manifolds, which are integrable and have the lowest dimension four.
It is established a correspondence of the derived Lie algebras of types of invariant hypercomplex structures and the explicit matrix representation of their Lie groups. There are constructed examples of the considered structure of different types on some known Lie groups.\thanks{Research supported by National Scientific Program ''Young Researchers and Post-Doctorants''}
\end{abstract}
\maketitle
\section*{Introduction}

In this work we continue the investigation of the almost hypercomplex manifold with Hermitian-Norden  metrics. The manifold under consideration is a $4n$-dimensional manifold equipped with an almost hypercomplex structure $H$ which is a triad of anticommuting almost complex structures such that each of them is a composition of the two other structures. Moreover, the introduced pseudo-Riemannian metric on the manifold is Hermitian with respect to one of the structures of $H$ and a Norden metric with respect to other two structures of $H$.

The geometry of almost hypercomplex manifolds with Hermitian-Norden metrics is studied in \cite{GriManDim12,Ma05,ManSek,GriMan24,Man28,ManGri32,NakHMan}. This type of manifolds are the only possible way to involve Norden-type metrics on almost hypercomplex manifolds, which origin from complex Riemannian manifolds.

In \cite{Barb,GriMan24,Man44,HM5,ZamNak}, there are considered Lie groups as manifolds with different similar structures and metrics. Moreover, in \cite{Man44}, integrable hypercomplex structures with Hermitian-Norden metrics on Lie groups of dimension 4 are considered. There are constructed the corresponding five types of invariant hypercomplex structures. The different cases regarding the signature of the metric are considered.

As it is known from \cite{Gil}, each representation of a Lie algebra corresponds uniquely to a representation of a simply connected Lie group. This relation is one-to-one. Thus, knowing the representation of a certain Lie algebra settles the question of the representation of its Lie group.
In this work it is established a correspondence of the derived Lie algebras of the types of invariant hypercomplex structures, constructed in \cite{Man44}, and the explicit matrix representation of their Lie groups.

The last section of the work is devoted to equipping known Lie groups with integrable hypercomplex structures with Hermitian-Norden metrics of different types.

\section{Preliminaries}\label{sect-prel}

\subsection{Almost hypercomplex manifolds with Hermitian-Norden metrics}

Firstly, let us recall the notion of almost hypercomplex structure $H$
on a $4n$-dimensional
manifold $\MM^{4n}$. The considered structure is a triad
of anticommuting almost complex structures
such that each of them is a composition of the two other structures, \ie for $H=(J_\alpha)$ $(\alpha=1,2,3)$ the property $J_3=J_1\circ J_2=-J_2\circ J_1$ is satisfied
\cite{AlMa,So}.

Let us equip $H$ with a metric structure,
generated by a pseudo-Riemannian metric $g$ of neutral signature
\cite{GriMan24,GriManDim12}. Here, in the considered case, one (resp., the other
two) of the almost complex structures of $H$ acts as an isometry
(resp., anti-isometries) with respect to $g$ in each
tangent fibre.
Thus, there exist three (0,2)-tensors
associated with $H$ to the metric $g$ -- a K\"ahler form and two metrics of the Hermitian-Norden type.
In the considered case we have a Hermitian metric $g$ with respect to $J_1$ and a Norden
metric $g$ regarding $J_2$ and $J_3$. So, we call the derived almost hypercomplex
metric structure an almost hypercomplex structure with Hermitian-Norden metrics.

Let $(\MM,H)$ be an almost hypercomplex manifold, where $\MM$ is a
$4n$-dimension\-al differentiable manifold and $H=(J_1,J_2,J_3)$
is an almost hypercomplex structures on $\MM$ with the following
properties for all cyclic permutations $(\al, \bt, \gm)$ of
$(1,2,3)$:%
\begin{equation*}\label{J123} %
J_\al=J_\bt\circ J_\gm=-J_\gm\circ J_\bt, \qquad
J_\al^2=-I,
\end{equation*} %
where $I$ denotes the identity. %\cite{AlMa}.

Let $g$ be a neutral metric on $(\MM,H)$ with the following properties
\begin{equation}\label{gJJ} %
g(\cdot,\cdot)=\ea g(J_\al \cdot,J_\al \cdot), \end{equation} %
where
\begin{equation*}\label{epsiloni}
\ea=
\begin{cases}
\begin{array}{ll}
1, \quad & \al=1;\\%[4pt]
-1, \quad & \al=2;3.
\end{array}
\end{cases}
\end{equation*}

Here and further, the index $\alpha$ runs over the range
$\{1,2,3\}$ unless otherwise is stated.

The associated 2-form $g_1$ and respectively the
associated neutral metrics $g_2$ and $g_3$ are determined by
\begin{equation}\label{gJ} %
g_\al(\cdot,\cdot)=g(J_\al \cdot,\cdot)=-\ea g(\cdot,J_\al \cdot).
\end{equation}%

The structure $(H,G)=(J_1,J_2,J_3;g,g_1,g_2,g_3)$ on
$\MM^{4n}$ is called an \emph{almost hy\-per\-com\-plex
structure with Hermit\-ian-Norden metrics} and the respective manifold $(\MM,H,G)$ is called an \emph{almost hypercomplex
manifold with Hermit\-ian-Norden metrics} (\cite{GriManDim12}). % or shortly an \emph{almost $hcHN$-manifold}.
%These manifolds are studied in \cite{GriManDim12,Ma05,ManSek,GriMan24,Man28,ManGri32,NakHMan}.

In \cite{GriManDim12}, there are introduced the fundamental tensors of a manifold $(\MM,H,G)$ by the following three
$(0,3)$-tensors
\begin{equation*}\label{F'-al}
F_\al (x,y,z)=g\bigl( \left( \n_x J_\al
\right)y,z\bigr)=\bigl(\n_x g_\al\bigr) \left( y,z \right),
\end{equation*}
where $\n$ is the Levi-Civita connection generated by $g$.
These tensors have the basic properties
\begin{equation*}\label{FaJ-prop}
  F_{\al}(x,y,z)=-\ea F_{\al}(x,z,y)=-\ea F_{\al}(x,J_{\al}y,J_{\al}z)
\end{equation*}
and the following relations are valid for all cyclic permutations $(\al, \bt, \gm)$ of
$(1,2,3)$
\begin{equation*}\label{F1F2F3}
    F_\al(x,y,z)=F_\bt(x,J_\gm y,z)-\ea F_\gm(x,y,J_\bt z).
\end{equation*}

The corresponding Lee forms $\ta_\al$ are defined by
\begin{equation*}\label{theta-al}
\ta_\al(\cdot)=g^{kl}F_\al(e_k,e_l,\cdot)
\end{equation*}%
for an arbitrary basis $\{e_1,e_2,\dots, e_{4n}\}$ of $T_p\MM$,
$p\in \MM$.

In \cite{GriManDim12}, the so-called \emph{hyper-K\"ahler
manifolds with Hermitian-Norden metrics} are studied. They are the almost hypercomplex manifold with Hermitian-Norden metrics in the class
$\KKK$, where $\n J_\al=0$ for all $\al$. It is proved in \cite{GriMan24} that a sufficient
condition for $(\MM,H,G)$ to be in $\KKK$ is this manifold to be of
K\"ahler-type with respect to two of the three complex structures
of $H$.

Let us remark that according to \eqref{gJJ} $(\MM,J_1,g)$
is almost Hermitian manifold whereas $(\MM,J_\al,g)$, $\al=2,3$,
are almost complex manifolds with Norden metric.
The basic classes of the mentioned two types of
manifolds are given in \cite{GrHe} and \cite{GaBo}, respectively. The special class $\W_0(J_\al):$ $F_\al=0$  of the
K\"ahler-type manifolds belongs to any other class in the corresponding classification.
In the lowest 4-dimensional case, the four basic classes
of almost Hermitian manifolds with respect to
$J_1$ are restricted to two:
$\W_2(J_1)$, the class of the almost K\"ahler manifolds, and
$\W_4(J_1)$, the class of the Hermitian manifolds which, for dimension 4, are determined by:
\begin{equation*}\label{cl-H-dim4}
\begin{split}
&\W_2(J_1):\; \mathop{\s}_{x,y,z}\bigl\{F_1(x,y,z)\bigr\}=0; \\
&\W_4(J_1):\; F_1(x,y,z)=\dfrac{1}{2}
                \left\{g(x,y)\ta_1(z)-g(x,J_1y)\ta_1(J_1z)\right. \\
&\phantom{\W_4(J_1):\; F_1(x,y,z)=\quad\,}
                \left.-g(x,z)\ta_1(y)+g(x,J_1z)\ta_1(J_1y)
                \right\},
\end{split}
\end{equation*}
where $\s $ is the cyclic sum by three
arguments $x$, $y$, $z$.
The basic classes of the almost complex manifolds with Norden metric ($\al=2$ or $3$)
are determined, for dimension $4$, as follows:
\begin{equation*}\label{cl-N-dim4}
\begin{split}
&\W_1(J_\al):\; F_\al(x,y,z)=\dfrac{1}{4}\bigl\{
g(x,y)\ta_\al(z)+g(x,J_\al y)\ta_\al(J_\al z)\bigr.\\
&\phantom{\W_1(J_\al):\; F_\al(x,y,z)=\quad\,\,} %
\bigl.+g(x,z)\ta_\al(y)
    +g(x,J_\al z)\ta_\al(J_\al y)\bigr\};\\
&\W_2(J_\al):\; \mathop{\s}_{x,y,z}
\bigl\{F_\al(x,y,J_\al z)\bigr\}=0,\qquad \ta_\al=0;\\
&\W_3(J_\al):\; \mathop{\s}_{x,y,z} \bigl\{F_\al(x,y,z)\bigr\}=0.
\end{split}
\end{equation*}

%It is known that the class of complex manifolds with Norden metric for $J_{\al}$ ($\al=2,3$) is
%$\W_1\oplus\W_2$.

Let us denote by
$\W^0=\left\{\W\;|\;\mathrm{d}\left(\theta_1\circ J_1\right)=0\right\}$
the class of the (locally) confor\-mally equivalent
$\KKK$-manifolds, where
$%\[
\W=\W_4(J_1)\cap\W_1(J_2)\cap W_1(J_3)
$%\]
and a conformal transformation of the metric is given by $\overline{g} = e^{2u}g$
for a differentiable function $u$ on the manifold.

It is known that an almost hypercomplex structure $H=(J_\alpha)$  is
a hypercomplex structure if the Nijenhuis tensors $[J_\al,J_\al]$, determined by
\begin{equation*}\label{N_al} %
[J_\al,J_\al](\cdot,\cdot)\allowbreak{}= \left[J_\al \cdot,J_\al \cdot
\right]
    -J_\al\left[J_\al \cdot,\cdot \right]
    -J_\al\left[\cdot,J_\al \cdot \right]
    -\left[\cdot,\cdot \right],
\end{equation*}%
vanish on $\X(\MM)$ for each $\alpha$ (e.g. \cite{Boy}).
Moreover, it is known that $H$ is
hypercomplex if and only if two of $[J_\al,J_\al]$ vanish.

Then the class of hypercomplex manifolds with Hermitian-Norden metrics is
\[
\HCC=\W_4(J_1)\cap\left(\W_1\oplus\W_2\right)(J_2)\cap\left(\W_1\oplus\W_2\right)(J_3).
\]

Let us denote by $\HCC'=\W_0(J_1)\cap\left(\W_1\oplus\W_2\right)(J_2)\cap\left(\W_1\oplus\W_2\right)(J_3)$ the respective subclass of $\HCC$.

\section{The main result}\label{sect-lie}

Let $L$ be a simply connected 4-dimensional real Lie group
admitting an invariant hypercomplex structure, \ie left translations by elements of $L$ are holomorphic with respect to
$J_{\al}$ for all $\al$. The corresponding Lie algebra of $L$ is denoted by $\mathfrak{l}$.
%A left invariant
%metric on $L$ is called \emph{invariant hyper-Hermitian}  if it is
%hyper-Hermitian with respect to some invariant hypercomplex
%structure on $L$. It is known that all such metrics on given $L$
%are equivalent up to homotheties.

In \cite{Barb}, invariant hypercomplex structures $H$ on 4-dimensional
real Lie groups are classified as follows
\begin{thm}[\cite{Barb}]\label{thm-Barb}
The only 4-dimensional Lie algebras admitting an integrable
hypercomplex structure are the following types:
%\begin{enumerate}
%    \item[

\emph{\textbf{(hc1)}} $\mathfrak{l}$ is Abelian;$\qquad\qquad$

\emph{\textbf{(hc2)}} $\mathfrak{l}\cong\R\oplus\mathfrak{so}(3)$;
$\qquad\qquad$

\emph{\textbf{(hc3)}} $\mathfrak{l}\cong\mathfrak{aff}(\C)$;

\emph{\textbf{(hc4)}} $\mathfrak{l}$ is the solvable Lie algebra corresponding to $\R H^4$;

\emph{\textbf{(hc5)}} $\mathfrak{l}$ is the solvable Lie algebra corresponding to $\C H^2$,
%\end{enumerate}\
\\
where $\R\oplus\mathfrak{so}(3)$ is the Lie algebra of the Lie
groups $U(2)$ and $S^3\times S^1$; $\mathfrak{aff}(\C)$ is the Lie
algebra of the affine motion group of $\C$, the unique
4-dimensional Lie algebra carrying an Abelian hypercomplex
structure; $\R H^4$ is the real hyperbolic space; $\C H^2$ is the
complex hyperbolic space.
\end{thm}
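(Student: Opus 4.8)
The statement is the classification of four-dimensional real Lie algebras $\mathfrak{l}$ carrying an integrable invariant hypercomplex structure, so the plan is purely algebraic. First I would pass from the simply connected group $L$ to its Lie algebra: an invariant hypercomplex structure on $L$ is the same datum as a triple $H=(J_1,J_2,J_3)$ of endomorphisms of the vector space $\mathfrak{l}$ with $J_\alpha^2=-I$ and $J_\alpha=J_\beta\circ J_\gamma=-J_\gamma\circ J_\beta$ for cyclic $(\alpha,\beta,\gamma)$, each of which is integrable in the sense that its Nijenhuis tensor, evaluated on left-invariant vector fields, vanishes. On a Lie algebra this integrability is the algebraic identity
\[
[J_\alpha x, J_\alpha y]=[x,y]+J_\alpha[J_\alpha x, y]+J_\alpha[x, J_\alpha y], \qquad x,y\in\mathfrak{l},
\]
which is exactly the vanishing of the tensor $[J_\alpha,J_\alpha]$ recalled before the theorem. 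By the remark preceding the statement it suffices to impose this for two of the three structures, say $J_1$ and $J_2$; integrability of $J_3$ then follows.

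Second, I would exploit that $H$ turns the four-dimensional space $\mathfrak{l}$ into a one-dimensional left module over the quaternions. Choosing any $e_0\neq 0$ and setting $e_1=J_1 e_0$, $e_2=J_2 e_0$, $e_3=J_3 e_0$ gives a basis in which the three operators act by the standard quaternion multiplication table. Relative to this adapted basis the problem reduces to determining which sets of structure constants $[e_i,e_j]=\sum_k c_{ij}^k e_k$ are simultaneously compatible with (i) the two integrability identities above and (ii) the Jacobi identity.

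Third, I would run a case analysis organized by the Levi decomposition and the derived subalgebra $[\mathfrak{l},\mathfrak{l}]$. If $\mathfrak{l}$ is not solvable, its Levi factor is three-dimensional simple and the one-dimensional radical is central; substituting into the integrability identities forces the simple factor to be the compact form, giving $\mathfrak{l}\cong\R\oplus\mathfrak{so}(3)$ and ruling out $\mathfrak{sl}(2,\R)\oplus\R$, which is \textbf{(hc2)}. If $\mathfrak{l}$ is solvable I would split according to $\dim[\mathfrak{l},\mathfrak{l}]$: the abelian case $\dim=0$ is \textbf{(hc1)}, while the remaining values, after reducing the bracket to normal form by a change of adapted basis preserving $H$ and imposing (i)--(ii), yield exactly the abelian hypercomplex algebra $\mathfrak{aff}(\C)$ (derived algebra of dimension $2$) and the two rank-one solvable algebras of $\R H^4$ and $\C H^2$ (derived algebra of dimension $3$, abelian and Heisenberg respectively), that is \textbf{(hc3)}--\textbf{(hc5)}. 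Finally I would establish the converse by exhibiting explicit standard operators $J_\alpha$ on each of the five algebras and checking the identities, so that the list is both necessary and sufficient.

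The main obstacle is the solvable case: the integrability identities and the Jacobi identity together form an overdetermined quadratic system in the structure constants, and the difficulty is the bookkeeping needed to prove that every solution reduces, modulo automorphisms preserving $H$, to one of the three listed normal forms, with no spurious families and no omissions. Ruling out the non-compact simple factor $\mathfrak{sl}(2,\R)$ is the other delicate point, since both three-dimensional simple algebras appear admissible until the full quaternionic compatibility encoded in \emph{both} integrability identities is imposed.
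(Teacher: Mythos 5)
This statement is not proved in the paper at all: it is Barberis's classification, quoted from the cited reference \cite{Barb}, so there is no internal proof to compare your attempt against --- the paper simply imports the result. Judged on its own terms, your outline does follow what is essentially the strategy of the original source: translate integrability into the algebraic identity $[J_\alpha x,J_\alpha y]=[x,y]+J_\alpha[J_\alpha x,y]+J_\alpha[x,J_\alpha y]$ on $\mathfrak{l}$, note that two vanishing Nijenhuis tensors suffice, use the quaternionic module structure to fix an adapted basis, and run a case analysis. Barberis organizes her cases primarily by $\dim[\mathfrak{l},\mathfrak{l}]$, which amounts to the same dichotomy as your Levi-decomposition split once one observes that a non-solvable four-dimensional $\mathfrak{l}$ must be $\mathfrak{s}\oplus\R$ with $\mathfrak{s}$ three-dimensional simple acting trivially on the one-dimensional radical.

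The genuine gap is that, as written, your text is a plan rather than a proof: the entire substantive content --- showing that $\dim[\mathfrak{l},\mathfrak{l}]=1$ is impossible, that $\dim[\mathfrak{l},\mathfrak{l}]=2$ forces $\mathfrak{aff}(\C)$, that the solvable case with $\dim[\mathfrak{l},\mathfrak{l}]=3$ yields only the $\R H^4$ and $\C H^2$ algebras, and that $\mathfrak{sl}(2,\R)\oplus\R$ is excluded --- is precisely the ``bookkeeping'' you identify as the main obstacle, and you defer all of it. You also never exhibit the explicit structures needed for the converse direction. In the context of this paper the correct move is simply to cite \cite{Barb}; if you intend to supply an independent proof, the overdetermined quadratic system in the structure constants must actually be solved case by case, and the elimination of the split simple factor requires invoking both integrability identities simultaneously, as you correctly anticipate but do not carry out.
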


A standard hypercomplex structure on $\mathfrak{l}$ is
defined as in \cite{So}:
\begin{equation}\label{JJJ}
\begin{array}{llll}
J_1e_1=e_2, \quad & J_1e_2=-e_1,  \quad &J_1e_3=-e_4, \quad &J_1e_4=e_3;
\\[6pt]
J_2e_1=e_3, &J_2e_2=e_4, &J_2e_3=-e_1, &J_2e_4=-e_2;
\\[6pt]
J_3e_1=-e_4, &J_3e_2=e_3, &J_3e_3=-e_2, &J_3e_4=e_1,\\[6pt]
\end{array}
\end{equation}
where $\{e_1,e_2,e_3,e_4\}$ is a basis of a 4-dimensional real Lie
algebra $\mathfrak{l}$ with center $\z$ and derived Lie algebra
$\mathfrak{l}'=[\mathfrak{l},\mathfrak{l}]$.

A pseudo-Euclidian metric $g$ of
neutral signature is introduced by
\begin{equation}\label{g}
g(x,y)=x^1y^1+x^2y^2-x^3y^3-x^4y^4,
\end{equation}
where $x(x^1,x^2,x^3,x^4)$, $y(y^1,y^2,y^3,y^4) \in \mathfrak{l}$. The metric $g$
generates an almost hypercomplex structure with Hermitian-Norden metrics on $\mathfrak{l}$, according \eqref{gJJ} and \eqref{gJ}.

In \cite{Man44}, there are considered integrable hypercomplex structures with Hermitian-Norden metrics on Lie groups of dimension 4. There are constructed and characterized different types of hypercomplex
structures on Lie algebras following the Barberis classification of invariant hypercomplex structures $H$ on 4-dimensional real Lie groups, given in \thmref{thm-Barb}. Moreover, in \cite{Man44} it is made a correspondence between the 4-dimensional real Lie groups for each class in the Barberis classification and the respective class of hypercomplex manifold with Hermitian-Norden metrics, given in Table 1. In some of the classes there are considered separately the different cases of signature of $g$ on $\mathfrak{l}'$.

\begin{table}[ht]
\centering
\begin{tabular}{|l|c|}
  \hline
  \emph{\textbf{(hc1)}} & $\KKK$ \\
  \hline
  \emph{\textbf{(hc2)}} & $\HCC$ \\
  \hline
  \emph{\textbf{(hc3.1)}} & $\HCC'$ \\
  \hline
  \emph{\textbf{(hc3.2)}} & $\W^0$ \\
  \hline
  \emph{\textbf{(hc4.1)}} & $\HCC$ \\
  \hline
  \emph{\textbf{(hc4.2)}} & $\W^0$ \\
  \hline
  \emph{\textbf{(hc5.1)}} & $\HCC$ \\
  \hline
  \emph{\textbf{(hc5.2)}} & $\HCC$ \\
  \hline
\end{tabular}
\caption{}
%\label{tab:myfirsttable}
\end{table}

It is known (e.g. \cite{Gil}) that for a real Lie algebra of finite dimension
there is a corresponding connected simply connected Lie group, determined uniquely up to isomorphism.
It is arose the problem for determination of the Lie group, given by its explicit matrix representation,  which is isomorphic to the Lie group equipped with an integrable hypercomplex structure corresponding to each class, given in \cite{Man44}.

\begin{theorem}\label{thm:main}
Let $(L,H,G)$ be a 4-dimensional hypercomplex manifold with Her\-mit\-ian-Norden metrics. Then the compact simply connected Lie group isomorphic to $L$, both with one and the same Lie algebra, has the form
\begin{equation*}\label{eA}
  e^A=E+tA+uA^2,
\end{equation*}
where $E$ is the identity matrix and $A$ is the matrix representation of the corresponding Lie algebra. The matrix form of $A$ as well as the real parameters $t$ and $u$  for the different classes of 4-dimensional Lie algebras admitting a hypercomplex structure are the following for $a,b,c,d\in\R$:
\end{theorem}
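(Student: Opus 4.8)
The plan is to read $e^{A}=E+tA+uA^{2}$ as a truncated exponential series and to prove that, for each type listed in \thmref{thm-Barb}, the Lie algebra $\mathfrak{l}$ carries a faithful matrix representation whose generic element $A=A(a,b,c,d)$ is annihilated by a cubic polynomial of the special form $\lambda(\lambda^{2}-\sigma)$. In other words, I would exhibit for every class a representation satisfying the identity
\begin{equation*}
A^{3}=\sigma A,
\end{equation*}
where $\sigma=\sigma(a,b,c,d)\in\R$ is a scalar that vanishes, is positive, or is negative according to the class and to the signature of $g$ on $\mathfrak{l}'=[\mathfrak{l},\mathfrak{l}]$. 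Once this relation is in force, every power $A^{k}$ with $k\ge 3$ reduces to an element of $\Span\{E,A,A^{2}\}$, so the exponential series collapses to a quadratic polynomial in $A$ and the coefficients $t,u$ can be isolated.

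First I would take from \cite{Man44} the structure equations of $\mathfrak{l}$ for each of \textbf{(hc1)}--\textbf{(hc5)} and, with the standard hypercomplex structure \eqref{JJJ} and the neutral metric \eqref{g} fixed, write down in each case a concrete faithful representation $A(a,b,c,d)$; the explicit matrices, together with the scalars $t$ and $u$, constitute exactly the list asserted by the theorem. The substantive step is to verify the cubic relation $A^{3}=\sigma A$ by directly computing $A^{2}$ and $A^{3}$. For the type \textbf{(hc2)}, $\mathfrak{l}\cong\R\oplus\mathfrak{so}(3)$, this rests on the classical identity $X^{3}=-\mu^{2}X$ for a skew generator, with $\mu^{2}$ a positive-definite quadratic form in the parameters, so that $e^{A}$ is the Rodrigues formula; for the solvable types \textbf{(hc3)}--\textbf{(hc5)} the same computation produces either a nilpotent block with $A^{3}=0$ or a relation with $\sigma\ge 0$, and the separation into subclasses \textbf{(hc3.1/3.2)}, \textbf{(hc4.1/4.2)}, \textbf{(hc5.1/5.2)} corresponds precisely to the sign of $\sigma$, that is, to the signature of $g$ restricted to $\mathfrak{l}'$.

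Granting $A^{3}=\sigma A$, the coefficients are obtained by interpolating $e^{\lambda}$ by a quadratic at the eigenvalues $\{0,\pm\sqrt{\sigma}\}$ of $A$. Because the annihilating cubic has no constant term, $0$ is always an eigenvalue, which forces the constant coefficient to equal $e^{0}=1$ and thereby explains why $E$ enters with unit coefficient. Solving the interpolation conditions gives
\begin{equation*}
t=\frac{\sin\mu}{\mu},\qquad u=\frac{1-\cos\mu}{\mu^{2}}\qquad(\sigma=-\mu^{2}<0),
\end{equation*}
\begin{equation*}
t=\frac{\sinh\mu}{\mu},\qquad u=\frac{\cosh\mu-1}{\mu^{2}}\qquad(\sigma=\mu^{2}>0),
\end{equation*}
and $t=1$, $u=\tfrac12$ in the nilpotent case $\sigma=0$, which is the common confluent limit $\mu\to 0$ of both formulas. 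These are the values to be entered in the table.

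It remains to identify the matrix group $\{e^{A}\}$ with the prescribed Lie group. Here I would invoke the correspondence recalled from \cite{Gil}: since each $A(a,b,c,d)$ is a \emph{faithful} representation of $\mathfrak{l}$ realizing the correct brackets, it integrates to a representation of the connected simply connected group with Lie algebra $\mathfrak{l}$, and this group is unique up to isomorphism. I expect the principal difficulty to be constructing, uniformly across the five types, a representation that is simultaneously faithful and killed by a cubic of the homogeneous shape $\lambda(\lambda^{2}-\sigma)$; in the non-semisimple cases the spectral interpolation must be carried out with the appropriate derivative (confluent) condition at the repeated eigenvalue $0$, and one must check that the relation $A^{3}=\sigma A$ holds identically in $a,b,c,d$ and not merely for generic values.
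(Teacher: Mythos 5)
There is a genuine gap: the algebraic identity $A^{3}=\sigma A$ on which your whole argument rests is false for half of the cases in the table, namely $(hc3.1)$, $(hc3.2)$, $(hc5.1)$ and $(hc5.2)$. For the matrix $A$ of type $(hc3.1)$ the spectrum is $\{0,0,c+ia,c-ia\}$ (the nonzero part comes from the block $\bigl(\begin{smallmatrix} c & a\\ -a & c\end{smallmatrix}\bigr)$), and for $(hc5.1)$ it is $\{0,-a,-\tfrac{a}{2},-\tfrac{a}{2}\}$; in neither case is the annihilating cubic of the homogeneous shape $\lambda(\lambda^{2}-\sigma)$ for any real $\sigma$, so the exponential does not reduce to a Rodrigues-type formula and your proposed coefficients $t=\sin\mu/\mu$, $u=(1-\cos\mu)/\mu^{2}$ (or their hyperbolic analogues) cannot reproduce the entries of the table, which involve $e^{c}\cos a$, $e^{c}\sin a$ and the two distinct exponentials $e^{-a}$, $e^{-a/2}$. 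Since the matrices $A$ are part of the statement being proved, you are not free to replace them by representations with a more convenient minimal polynomial; the difficulty you flag at the end (``constructing \dots a representation \dots killed by a cubic of the homogeneous shape $\lambda(\lambda^{2}-\sigma)$'') is not a technical wrinkle but an obstruction. Two smaller mismatches of the same origin: for $(hc4.1)$ your interpolation at $\{0,\pm a\}$ yields $t=\sinh a/a$, $u=(\cosh a-1)/a^{2}$, which gives the correct $e^{A}$ only because the minimal polynomial there is merely quadratic, but these are not the values $t=a^{-1}$, $u=a^{-2}e^{-a}$ asserted by the theorem; and in the nilpotent cases the paper has $u=0$ (nilpotency index $2$), not $u=\tfrac12$.

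The salvageable core of your idea is correct and close in spirit to what is needed: $0$ is always an eigenvalue, $A$ always satisfies \emph{some} cubic with zero constant term, hence $e^{A}$ is a quadratic polynomial in $A$ with constant coefficient $1$, and $t,u$ are obtained by (confluent) interpolation of $e^{\lambda}$ at the actual spectrum --- $\{0,0,c\pm ia\}$, $\{0,-a,-\tfrac{a}{2},-\tfrac{a}{2}\}$, etc. --- rather than at $\{0,\pm\sqrt{\sigma}\}$. Carrying that out would recover the table. The paper itself proceeds differently: it builds $A$ from the structure constants via $(M_k)_l^s=-C_{kl}^s$, computes the eigenvectors explicitly, forms $P$ and $P^{-1}$, evaluates $e^{A}=Pe^{J}P^{-1}$ in closed form, and only afterwards recognizes the result as $E+tA+uA^{2}$, treating the non-invertible (nilpotent) case separately by truncating the exponential series.
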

\begin{subequations}\label{thm1}
\begin{equation*}
\begin{array}{ll}
(hc1):\; &A=\left(
      \begin{array}{cccc}
        0 & 0 & 0 & 0 \\
        0 & 0 & 0 & 0 \\
        0 & 0 & 0 & 0 \\
        0 & 0 & 0 & 0 \\
      \end{array}
    \right),
    %\quad
%    \tr{A^2}=0,
\quad
t= 1,
\quad
u= 0;
\\
(hc2):\; &A=\left(
      \begin{array}{cccc}
        0 & 0 & 0 & 0 \\
        0 & 0 & d & -c \\
        0 & -d & 0 & b \\
        0 & c & -b & 0 \\
      \end{array}
    \right),
		\quad  \Dt:=b^2+c^2+d^2
    %\quad
    %\tr{A^2}=-2(b^2+c^2+d^2)=-2\Dt,
\\
&
\begin{array}{ll}
\bullet\; \text{for}\; (b,c)\neq (0,0) \qquad &t=    \Dt^{-\frac12}\sin{\sqrt{\Dt}},\\
																															&u=\Dt^{-1}\left(1-\cos{\sqrt{\Dt}}\right);  \\
\bullet\; \text{for}\; (b,c)= (0,0) \qquad & t=1,\qquad\qquad u=0;
\end{array}
%\end{array}
\\
(hc3.1):\; &A=\left(
      \begin{array}{cccc}
        0 & d & 0 & -b \\
        0 & c & 0 & a \\
        0 & -b & 0 & -d \\
        0 & -a & 0 & c \\
      \end{array}
    \right),
		\quad \Dt:=a(a^2+c^2)
    %\quad
%    \tr{A^2}=2(c^2-a^2),
\\
&
\begin{array}{ll}
\bullet\; \text{for}\; a(a^2+c^2)\neq 0\\
\phantom{\bullet\;}
t=\Dt^{-1}
    \left\{-2ac(1-e^c \cos a)+(a^2-c^2)e^c \sin a\right\}, \\
\phantom{\bullet\;}
u= \Dt^{-1}
    \left\{a(1-e^c \cos a)+c e^c \sin a\right\};
\end{array}
\\
&
\begin{array}{lll}
\bullet\; \text{for}\; (a,c)= (0,0) \qquad &t=1,\qquad &
u=0;
\end{array}
\\
(hc3.2):\; &A=\left(
      \begin{array}{cccc}
        c & d & 0 & 0 \\
        -d & c & 0 & 0 \\
        -a & -b & 0 & 0 \\
        b & -a & 0 & 0 \\
      \end{array}
    \right),
		\quad \Dt:=d(c^2+d^2)
    %\quad
%    \tr{A^2}=2(c^2-d^2),
\\
&
\begin{array}{ll}
\bullet\; \text{for}\; d(c^2+d^2)\neq 0\\
\phantom{\bullet\;}
t=\Dt^{-1}
    \left\{-2cd(1-e^c \cos d)+(d^2-c^2)e^c \sin d\right\}, \\
\phantom{\bullet\;}
u= \Dt^{-1}
    \left\{d(1-e^c \cos d)+c e^c \sin d\right\};
\end{array}
\\
&
\begin{array}{lll}
\bullet\; \text{for}\; (c,d)= (0,0) \qquad &t=1,\qquad &
u=0;
\end{array}
\end{array}
\end{equation*}
\\
\begin{equation*}
\begin{array}{ll}
(hc4.1):\; &A=\left(
      \begin{array}{cccc}
        0 & b & c & d \\
        0 & -a & 0 & 0 \\
        0 & 0 & -a & 0 \\
        0 & 0 & 0 & -a \\
      \end{array}
    \right),
    %\quad
%    \tr{A^2}=3a^2,
\\
&
\begin{array}{lll}
\bullet\; \text{for}\; a\neq 0 \qquad &t=a^{-1},\qquad 		&
u=a^{-2}e^{-a};  \\
\bullet\; \text{for}\; a= 0 \qquad & t=1,\qquad & u=0;
\end{array}
\\
(hc4.2):\; &A=\left(
      \begin{array}{cccc}
        -d & 0 & 0 & 0 \\
        0 & -d & 0 & 0 \\
        0 & 0 & -d & 0 \\
        a & b & c & 0 \\
      \end{array}
    \right),
    %\quad
%    \tr{A^2}=3d^2,
\\
&
\begin{array}{lll}
\bullet\; \text{for}\; d\neq 0 \qquad &t=d^{-1},\qquad 		&
u=d^{-2}e^{-d};  \\
\bullet\; \text{for}\; d= 0 \qquad & t=1,\qquad & u=0;
\end{array}
\\
(hc5.1):\; &A=\left(
      \begin{array}{cccc}
        0 & b & \frac{c}{2} & \frac{d}{2} \\
        0 & -a & 0 & 0 \\
        0 & \frac{d}{2} & -\frac{a}{2} & 0 \\
        0 & -\frac{c}{2} & 0 & -\frac{a}{2} \\
      \end{array}
    \right),
    %\quad
%    \tr{A^2}=\frac{3a^2}{2},
\\
&
\begin{array}{ll}
\bullet\; \text{for}\; a\neq 0 \qquad &t=a^{-1}\left\{e^{-a}-4e^{-\frac{a}{2}}+3\right\},\\
																										&u=a^{-2}\left\{2e^{-a}-4e^{-\frac{a}{2}}+2\right\};  \\
\bullet\; \text{for}\; a= 0 \qquad & t=1,\qquad\qquad u=0;
\end{array}
\\
(hc5.2):\; &A=\left(
      \begin{array}{cccc}
        -\frac{d}{2} & 0 & -\frac{b}{2} & 0 \\
        0 & -\frac{d}{2} & \frac{a}{2} & 0 \\
        0 & 0 & -d & 0 \\
        \frac{a}{2} & \frac{b}{2} & c & 0 \\
      \end{array}
    \right),
    %\quad
%    \tr{A^2}=\frac{3d^2}{2},
\\
&
\begin{array}{ll}
\bullet\; \text{for}\; d\neq 0 \qquad &t=d^{-1}\left\{e^{-d}-4e^{-\frac{d}{2}}+3\right\},\\
																										&u=d^{-2}\left\{2e^{-d}-4e^{-\frac{d}{2}}+2\right\};  \\
\bullet\; \text{for}\; d= 0 \qquad & t=1,\qquad\qquad u=0.
\end{array}
\end{array}
\end{equation*}
\end{subequations}

%\color{red}
\begin{proof}
To prove this theorem we can follow the idea of the proof of Theorem 2.1 in \cite{HM5}.
Let us consider the case when $(L,H,G)$ is a 4-dimensional hypercomplex manifold with Her\-mit\-ian-Norden metrics for which the corresponding 4-dimensional Lie algebra admitting a hypercomplex structure is of the class $(hc2)$.
Then, according to \cite{Man44}, the corresponding Lie algebra of $(hc2)$ is determined by
\begin{equation}\label{com1}
[e_2,e_4]=e_3,\quad [e_4,e_3]=e_2,\quad [e_3,e_2]=e_4.
\end{equation}
Using \eqref{com1}, we have that the nonzero values of the commutation coefficients are
\begin{equation}\label{Cij}
C_{24}^3=C_{43}^2=C_{32}^4=-C_{42}^3=-C_{34}^2=-C_{23}^4=1.
\end{equation}

According to \cite{Gil}, the commutation coefficients provide a matrix representation of the respective Lie algebra.
This representation is obtained by the basic matrices $M_k$. Their entries are determined by
\begin{equation}\label{Mij}
(M_k)_l^s=-C_{kl}^s.
\end{equation}
Using \eqref{Cij} and \eqref{Mij}, we obtain

\begin{equation*}\label{}
\begin{array}{c}
M_1=\left(
      \begin{array}{cccc}
        0 & 0 & 0 & 0\\
        0 & 0 & 0 & 0\\
        0 & 0 & 0 & 0\\
        0 & 0 & 0 & 0\\
      \end{array}
    \right),\quad
M_2=\left(
      \begin{array}{cccc}
        0 & 0 & 0 & 0\\
        0 & 0 & 0 & 0\\
        0 & 0 & 0 & 1\\
        0 & 0 & -1 & 0\\
      \end{array}
    \right),\\[30pt]
M_3=\left(
      \begin{array}{cccc}
        0 & 0 & 0 & 0\\
        0 & 0 & 0 & -1\\
        0 & 0 & 0 & 0\\
        0 & 1 & 0 & 0\\
      \end{array}
    \right),\quad
M_4=\left(
      \begin{array}{cccc}
        0 & 0 & 0 & 0\\
        0 & 0 & 1 & 0\\
        0 & -1 & 0 & 0\\
        0 & 0 & 0 & 0\\
      \end{array}
    \right).
\end{array}
\end{equation*}
Then we have $A=aM_1+bM_2+cM_3+dM_4$ for $a,b,c,d\in\R$.

Let us suppose that $(b,c,d)\neq (0,0,0)$.
The matrix representation of the considered Lie algebra is the matrix $A$, given in \thmref{thm:main}.
Then, the characteristic polynomial of $A$ is
\begin{equation*}\label{}
P_A(\lm)= \lm^2(\lm^2 + b^2 + c^2 + d^2).
\end{equation*}
Therefore, the eigenvalues $\lm_k$ ($k={1,2,3,4}$) are
\begin{equation*}\label{}
\lm_1=\lm_2=0, \qquad \lm_3=-\lm_4=i \sqrt{\Dt},
\end{equation*}
where $i$ is the imaginary unit and $\Dt=b^2 + c^2 + d^2$.
Hence, %for $\Dt\neq 0$, i.e. $(b,c,d)\neq (0,0,0)$,
the corresponding linearly independent eigenvectors $p_k$ ($k={1,2,3,4}$) are
\begin{equation*}\label{}
%\begin{array}{c}
p_1(1,0,0,0), \quad p_2(0,b,c,d), \quad p_3(0,\al,\beta,\gamma),\quad
p_4(0,\overline{\alpha},\overline{\beta},\overline{\gamma}),
%\end{array}
\end{equation*}
where $\al = c\Dt+ibd\sqrt{\Dt}$, $\beta = -b\Dt+icd\sqrt{\Dt}$, $\gamma = -i\sqrt{\Dt}(b^2+c^2)$
and $\overline{\al}$, $\overline{\beta}$, $\overline{\gamma}$ are the corresponding complex conjugates.
The vectors $p_k$ determine the following matrix %$P$ as follows% of the following form
\begin{equation*}\label{}
P=\left(
      \begin{array}{cccc}
        1 & 0 & 0 & 0\\
        0 & b & \alpha & \overline{\alpha}\\
        0 & c & \beta & \overline{\beta}\\
        0 & d & \gamma & \overline{\gamma}\\
      \end{array}
        \right)
\end{equation*}
having $\det P = -2i\Dt^{\frac52}(b^2+c^2)$.

Let us consider the first case when $\det P \neq 0$, \ie $(b,c) \neq (0,0)$.
Then we have
\begin{equation*}\label{}
P^{-1}=\left(
      \begin{array}{cccc}
        1 & 0 & 0 & 0 \\[6pt]
        0 & b\Dt^{-1} & c\Dt^{-1} & d\Dt^{-1} \\[6pt]
        0 & i\overline{\alpha}\delta & i\overline{\beta}\delta & i\overline{\gamma}\delta\\[6pt]
        0 & i\al \delta & i\beta \delta & i\gamma \delta\\
      \end{array}
        \right)
\end{equation*}
for the inverse matrix of $P$, where $\delta=\frac12\Dt^{-\frac32}\overline{\gamma}^{-1}$.
The Jordan matrix is the diagonal matrix $J=\mathrm{diag}(\lm_1,\lm_2,\lm_3,\lm_4)$. % with elements $J_{kk}=\lm_k$.
It is well known that
%\begin{equation*}\label{}
$e^A=Pe^JP^{-1}$.
%\end{equation*}
Then we obtain the matrix representation of the respective Lie group of the considered Lie algebra in this case as follows
\begin{equation*}\label{}
%G_1=\left\{
e^A=\left(%\left.
      \begin{array}{cccc}
        1 & 0 & 0 & 0 \\
        0 & 1-(c^2+d^2)u & bcu+dt & bdu-ct \\
        0 & bcu-dt & 1-(b^2+d^2)u & cdu+bt\\
        0 & bdu+ct & cdu-bt & 1-(b^2+c^2)u\\
      \end{array}
    \right),%\; \right| \;
     % \begin{array}{l}
%                (\Dt\neq 0)
%      \end{array}
%    \right\},
\end{equation*}
where $t=\Dt^{-\frac12}\sin\sqrt{\Dt}$ and $u=\Dt^{-1}\left(1-\cos\sqrt{\Dt}\right)$. This result can be written as
\begin{equation*}\label{}
%G_1=\left\{\left.
e^A=E+tA+uA^2.
\end{equation*}

Now, let us consider the second case when $\det P = 0$, \ie $(b,c) = (0,0)$. Then $P$ is a non-invertible matrix.
Therefore $A$ is nilpotent and $e^A$ can be computed directly from
\begin{equation*}\label{}
e^A=E+A+\frac{A^2}{2!}+\frac{A^3}{3!}+\dots+\frac{A^{q-1}}{(q-1)!},
\end{equation*}
where $q$ stands for the nilpotency index of $A$.
Then, using the form of $A$ in \thmref{thm:main}, we obtain that $q=2$ in the considered case. Therefore, the matrix representation of the Lie group in this case is
%\begin{equation*}\label{}
%G_1=\left\{
$e^A=E+A$. In order to generalize both cases into the formula $e^A=E+tA+uA^2$, we substitute here $t=1$ and $u=0$.
%\end{equation*}

By similar considerations we obtain the other results in \thmref{thm:main} for the rest of the classes.
\end{proof}

\section{Equipping of known Lie groups with hypercomplex structures and Hermitian-Norden metrics}\label{sect-ex}

%In this final section we equip some known Lie groups with considered hypercomplex structures with Hermitian-Norden metrics.

In \cite{Kow}, there are considered the matrix Lie groups $G_6$, $G_8$ and $G_{10}$ of the following form
\begin{equation}\label{Kow6}
\begin{array}{c}
G_6=
\left(
      \begin{array}{cccc}
        1  & x & \frac{1}{2}x^2 & y \\
        0 & 1 & x & z\\
        0 & 0 & 1 & w\\
        0 & 0 & 0 & 1\\
      \end{array}
    \right),
    \end{array}
\end{equation}
   %\\[30pt]
%   G_7=
%\left(
%      \begin{array}{cccc}
%        \cosh x  & \sinh x & 0 & y \\
%        \sinh x & \cosh x & 0 & z\\
%        z \cosh x - y \sinh x & z \sinh x - y \cosh x & 1 & w\\
%        0 & 0 & 0 & 1\\
%      \end{array}
%    \right),
%    \\[30pt]
\begin{equation}\label{Kow8}
\begin{array}{c}
   G_8=
\left(
      \begin{array}{cccc}
        \cos x  & \sin x & 0 & y \\
        -\sin x & \cos x & 0 & z\\
        z \cos x + y \sin x & z \sin x - y \cos x & 1 & w\\
        0 & 0 & 0 & 1\\
      \end{array}
    \right),
%    \\[30pt]
    \end{array}
\end{equation}
\begin{equation}\label{Kow10}
\begin{array}{c}
   G_{10}=
\left(
      \begin{array}{cccc}
        1  & x & y & z \\
        0 & 1 & w & v\\
        0 & 0 & 1 & w\\
        0 & 0 & 0 & 1\\
      \end{array}
    \right),
    \end{array}
\end{equation}
where $v,w,x,y,z\in\R$. These matrix groups and their automorphisms represent some of generalized affine symmetric spaces of infinite order which are considered in \cite{Kow}.

\begin{thm}
The matrix Lie groups $G_6$, $G_8$ and $G_{10}$ admit a hypercomplex structure with Hermitian-Norden metrics
of type $(hc4.1)$, $(hc3.2)$ and $(hc5.1)$, respectively%for an appropriate choice of the parameters,
.
\end{thm}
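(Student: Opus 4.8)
The plan is to identify the Lie algebra of each matrix group $G_6$, $G_8$, $G_{10}$ by differentiating the group parametrization at the identity, and then to exhibit an explicit linear change of basis that carries each computed Lie algebra onto the model Lie algebra of the claimed type from \thmref{thm-Barb} (with the standard structure \eqref{JJJ} and metric \eqref{g}). Concretely, for each group I would take the matrix entries as the coordinate functions and compute the left-invariant vector fields by evaluating the partial derivatives of the defining matrix with respect to each parameter at the identity element; this yields a basis $\{E_1,E_2,E_3,E_4\}$ of the Lie algebra as matrices in $\mathfrak{gl}(4,\R)$. Taking matrix commutators $[E_i,E_j]$ then gives the structure constants, which I would record exactly as in \eqref{com1}-type relations.

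Next I would match these structure relations against the bracket relations defining the model algebras of types $(hc4.1)$, $(hc3.2)$ and $(hc5.1)$ as used in \cite{Man44}. The target brackets are read off from the matrix representations $A$ in \thmref{thm:main}: for a given class, $A=aM_1+bM_2+cM_3+dM_4$, and the $M_k$ recover the nonzero commutation coefficients via $(M_k)_l^s=-C_{kl}^s$ as in \eqref{Mij}. So the verification reduces to finding a basis isomorphism $\f:\mathfrak{l}(G_6)\to\mathfrak{l}_{(hc4.1)}$, and likewise for the other two, preserving all brackets; since both sides are four-dimensional real Lie algebras with explicitly given structure constants, this is a finite linear-algebra check. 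Once the Lie algebra isomorphism is established, the standard hypercomplex structure \eqref{JJJ} and the neutral metric \eqref{g} transport through $\f$ and, by the classification in \thmref{thm-Barb} together with Table~1, endow $G_6$, $G_8$, $G_{10}$ with integrable hypercomplex structures with Hermitian-Norden metrics of the stated types.

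The main obstacle I expect is twofold. First, for $G_8$ the defining matrix is genuinely nonabelian and its parametrization mixes the coordinates nonlinearly (note the entries $z\cos x+y\sin x$ and $z\sin x-y\cos x$), so computing the left-invariant frame and the resulting structure constants requires care: I must differentiate at the identity correctly and confirm that the bracket relations indeed reproduce the $\mathfrak{aff}(\C)$-type algebra underlying class $(hc3.2)$ rather than a different solvable algebra of the same dimension. Second, the matching step is only meaningful up to isomorphism, so I would need to guard against two four-dimensional algebras sharing some brackets but being non-isomorphic; to settle this cleanly I would compute an invariant that distinguishes the Barberis types, for instance the derived algebra $\mathfrak{l}'=[\mathfrak{l},\mathfrak{l}]$ and whether the algebra is solvable, nilpotent, or contains a semisimple summand $\mathfrak{so}(3)$. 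These invariants pin down the type unambiguously, after which exhibiting the explicit change of basis completes the identification.

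The remaining computations — writing out the four basis matrices for each $G_i$, forming the six commutators, and displaying the intertwining linear map — are routine, so I would carry the argument out for $G_6$ in full detail and indicate that $G_8$ and $G_{10}$ follow by the same scheme, matching respectively the $(hc3.2)$ and $(hc5.1)$ bracket data. This mirrors the structure of the proof of \thmref{thm:main}, where one representative class is treated explicitly and the others are asserted to follow by analogous reasoning.
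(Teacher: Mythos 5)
Your strategy is genuinely different from the paper's, and the difference matters. The paper never computes the Lie algebras of $G_6$, $G_8$, $G_{10}$: its proof specializes almost all parameters to zero on both sides ($x=z=w=0$ in \eqref{Kow6}, $y=z=w=0$ in \eqref{Kow8}, $y=z=w=v=0$ in \eqref{Kow10}, and correspondingly $a=b=c=0$, etc., in the exponential $e^A=E+tA+uA^2$ of \thmref{thm:main}) and observes that the resulting one-parameter families of matrices coincide. So the paper's verification amounts to matching a single one-parameter subgroup of each $G_i$ with a one-parameter subgroup of the model group; no Lie algebra isomorphism is exhibited there.

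Your plan --- compute the full Lie algebra of each $G_i$ and exhibit a bracket-preserving isomorphism onto the model algebra of the claimed type --- is the standard way to prove such a statement, but the matching step would fail, precisely at the invariant check you yourself propose as a safeguard. For $G_6$, differentiating at the identity gives the basis $X=E_{12}+E_{23}$, $Y=E_{14}$, $Z=E_{24}$, $W=E_{34}$ with $[X,W]=Z$, $[X,Z]=Y$ and all other brackets zero: a nonabelian nilpotent (filiform) algebra, whereas the $(hc4.1)$ model is the non-nilpotent solvable algebra of $\R H^4$ with $[M_1,M_i]=M_i$; a nonabelian nilpotent four-dimensional algebra does not occur in \thmref{thm-Barb} at all. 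For $G_8$ the Lie algebra one obtains is solvable with one-dimensional center and three-dimensional derived algebra, while $\mathfrak{aff}(\C)$ (the type $(hc3)$ algebra) has trivial center and two-dimensional derived algebra, so no isomorphism exists. And $G_{10}$ depends on the five parameters $v,w,x,y,z$, hence is five-dimensional and cannot itself carry a four-dimensional hypercomplex structure. Taken literally, your proposal attempts to prove something stronger than what the paper's own argument establishes, and the attempt would terminate in a negative answer; to recover the paper's conclusion you would have to retreat to the much weaker assertion the paper actually verifies, namely the coincidence of certain one-parameter subfamilies of matrices.
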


\begin{proof}
Bearing in mind \thmref{thm:main} for $(hc4.1)$ ($t=1$, $u=a=b=c=0$), \eqref{Kow6} % for $G_6$
($x=z=w=0$) and substituting $y=d$, we have that $G_6$ can be considered as a hypercomplex manifold
with Hermitian-Norden metrics of type $(hc4.1)$.

Respectively, using \thmref{thm:main} for $(hc3.2)$
($t=1$, $u=a=b=0$, $c=-d=-1$) and \eqref{Kow8} % for $G_8$
($y=z=w=0$, $x=\frac{3\pi}{2}+2k\pi$, $k\in\mathbb{Z}$),
we have that $G_8$ can be considered as a hypercomplex manifold with Hermitian-Norden metrics of type $(hc3.2)$.

In a similar way, by means of \thmref{thm:main} for $(hc5.1)$ ($t=1$, $u=a=c=d=0$), \eqref{Kow10} % for $G_{10}$
($y=z=w=v=0$) and substituting $x=b$, we have that $G_{10}$ can be considered as a hypercomplex manifold with Hermitian-Norden metrics of type $(hc5.1)$.
\end{proof}


\begin{thebibliography}{33}

\bibitem{AlMa}
\textsc{D.\thinspace{}V.~Alekseevsky, S. Marchiafava},
\emph{Quaternionic structures
on a mani\-fold and subordinated structures}.
Ann. Mat. Pura Appl.
 \textbf{CLXXI} (IV) (1996) 205--273.





\bibitem{Barb}
\textsc{M.\thinspace{}L.~Barberis},
\emph{Hypercomplex structures on four-dimensional Lie groups}.
Proc. Amer. Math. Soc. \textbf{128} (4) (1997) 1043--1054.


\bibitem{Boy}
\textsc{C.\thinspace{}P.~Boyer},
\emph{A note on hyper-Hermitian four-manifolds}.
Proc. Amer. Math. Soc. \textbf{102} (1) (1988) 157--164.


\bibitem{GaBo}
\textsc{G. Ganchev, A. Borisov}, \emph{Note on the almost complex
manifolds with a Norden metric}. C. R. Acad. Bulg. Sci. \textbf{39} (1986) 31--34.

\bibitem{GaMiGr}
\textsc{G. Ganchev, V. Mihova, K. Gribachev}, \emph{Almost contact
manifolds with B-metric}. Math. Balkanica (N.S.) \textbf{7} (3-4) (1993) 261--276.

\bibitem{Gil}
\textsc{R. Gilmore}, \emph{Lie groups, Lie algebras and some of their applications}. A Wiley-Interscience Publication, New York (1974).


\bibitem{GrHe}
\textsc{A.~Gray, L.\thinspace{}M.~Hervella},
\emph{The sixteen classes of almost Hermitian manifolds and their linear invariants}.
Ann. Mat. Pura Appl. \textbf{CXXIII} (IV) (1980) 35--58.



\bibitem{GriMan24}%24.\bibitem{GrMa}
\textsc{K.~Gribachev, M.~Manev},
\emph{Almost hypercomplex pseudo-Her\-mit\-ian
manifolds and a 4-dimen\-sional Lie group with such structure}.
J. Geom. \textbf{88} (1-2) (2008) 41--52.


\bibitem{GriManDim12}%12.\bibitem{GrMaDi}
\textsc{K.~Gribachev, M.~Manev, S.~Dimiev},
\emph{On the almost hy\-per\-complex pseudo-Her\-mit\-ian manifolds}.
In:
Trends in Comp\-lex An\-al\-ysis, Differential Geometry and Mathe\-ma\-tical
Physics, eds. S. Dimiev and K. Sekigawa (World Sci. Publ.,
Singa\-pore, River Edge, NJ,  2003) 51--62.


\bibitem{Kow}
\textsc{O. Kowalski}, \emph{Generalized Symmetric Spaces, Lecture Notes in Mathematics}, 805 (Eds: A. Dold, B. Eckmann), Springer, Heidelberg (1980).



\bibitem{HM5}%28.\bibitem{Ma09}
\textsc{H.~Manev},
\emph{Matrix Lie groups as 3-dimensional almost contact B-metric manifolds}.
Facta Univ. Ser. Math. Inform., \textbf{30} (3) (2015) 341--351.





\bibitem{Ma05}%\bibitem{Man19}%19.
\textsc{M.~Manev},
\emph{Tangent bundles with Sasaki metric and almost
hypercomplex pseudo-Hermitian structure}.
In: Topics in Almost Hermitian Geometry and Related Fields.
(Y. Matsushita, E. Garcia-Rio, H. Hashimoto, T. Koda and T. Oguro, eds.) World Sci. Publ., Singapore (2005) 170--185.

\bibitem{Man28}%28.\bibitem{Ma09}
\textsc{M.~Manev},
\emph{A connection with parallel torsion on almost
hypercomplex manifolds with Hermitian and anti-Hermitian metrics}.
J. Geom. Phys. \textbf{61} (1) (2011) 248--259.



\bibitem{Man44} %44.
\textsc{M. Manev}, \emph{Hypercomplex structures with Hermitian-Norden metrics on four-dimen\-sional Lie algebras},
J. Geom. \textbf{105} (1) (2014) 21--31.

\bibitem{ManGri32}%32. \bibitem{ManGri}
\textsc{M.~Manev, K.~Gribachev},
\emph{A connection with parallel totally skew-symmetric torsion on a class
of almost hypercomplex manifolds with Hermitian and anti-Hermitian metrics}.
Int. J. Geom. Methods Mod. Phys. \textbf{8} (1) (2011) 115--131.


\bibitem{ManSek}
\textsc{M.~Manev, K.~Sekigawa},
\emph{Some four-dimensional almost hypercomplex pseudo-Hermitian manifolds}.
In: Contemporary Aspects of Complex Analysis, Differential Geometry and Mathematical
Physics (S.~Di\-miev and K.~Sekigawa, eds.) World
Sci. Publ., Singapore (2005) 174--186.




\bibitem{NakHMan}
\textsc{G.~Nakova, H.~Manev},
\emph{Holomorphic submanifolds of some hypercomplex manifolds
with Hermitian and Norden metrics}.
C. R. Acad. Bulgare Sci. \textbf{70} (1) (2017) 29--40.


\bibitem{So}
\textsc{A.~Sommese},
\emph{Quaternionic manifolds}.
Math. Ann. \textbf{212} (1975) 191--214.

\bibitem{ZamNak}
\textsc{S. Zamkovoy, G. Nakova}. \emph{The decomposition of almost paracontact metric manifolds in eleven classes revisited}.
J. Geom. 109:18 (2018)

\end{thebibliography}
\end{document}